\numberwithin{equation}{section}
\DeclareMathOperator{\End}{{End}}
\begin{document}
\newcommand{\s}{\vspace{0.2cm}}

\newtheorem{theo}{Theorem}
\newtheorem*{theo*}{Theorem}
\newtheorem{prop}{Proposition}
\newtheorem{coro}{Corollary}
\newtheorem{lemm}{Lemma}
\newtheorem{example}{Example}
\theoremstyle{remark}
\newtheorem{rema}{\bf Remark}
\newtheorem{defi}{\bf Definition}
\newcommand{\rac}{{\mathbb{Q}}}
\newcommand{\comp}{{\mathbb{C}}}
\newcommand{\hip}{{\mathbb{H}}}
\newcommand{\cA}{\mathcal{A}}
\newcommand{\cL}{\mathcal{L}}
\newcommand{\cJ}{\mathcal{J}}
\newcommand{\cK}{\mathcal{K}}
\newcommand{\cH}{\mathcal{H}}
\newcommand{\cC}{\mathcal{C}}
\newcommand{\cG}{\mathcal{G}}
\newcommand{\cM}{\mathcal{M}}
\newcommand{\cW}{\mathcal{W}}

\newcommand{\R}{\mathbb R}
\newcommand{\RR}{\mathbb R}
\newcommand{\QQ}{\mathbb Q}
\newcommand{\NN}{\mathbb N}
\newcommand{\ZZ}{\mathbb Z}
\newcommand{\CC}{\mathbb C}
\newcommand{\PP}{\mathbb P}
\newcommand{\FF}{\mathbb F}

\title[A generalisation of Kani-Rosen decomposition theorem]{A generalisation of Kani-Rosen decomposition theorem for Jacobian varieties}
\date{}

\author{Sebasti\'an Reyes-Carocca and Rub\'i E. Rodr\'iguez}
\address{Departamento de Matem\'atica y Estad\'istica, Universidad de La Frontera, Avenida Francisco Salazar 01145, Casilla 54-D, Temuco, Chile.}
\email{sebastian.reyes@ufrontera.cl, rubi.rodriguez@ufrontera.cl}

\thanks{Partially supported by Postdoctoral Fondecyt Grant 3160002, Fondecyt Grant 1141099 and Anillo ACT 1415 PIA-CONICYT Grant}
\keywords{Jacobian varieties, Riemann surfaces, Group actions}
\subjclass[2010]{14H37, 14H40, 14L30}

\begin{abstract} In this short paper we generalise a theorem  due to Kani and Rosen on decomposition of Jacobian varieties of Riemann surfaces with group action. This generalisation extends the set of Jacobians for which it is possible to obtain an isogeny decomposition where all the factors are Jacobians.
\end{abstract}
\maketitle

\section{Introduction and Statement of the Results}

Let $C$ be a compact Riemann surface of genus $g_C \ge 2$ and $JC$ its Jacobian variety. For a group $H$ of automorphisms of $C$ we denote by $|H|$ its order and by $C_H$ the underlying Riemann surface structure of the quotient $C/H.$

 Kani and Rosen in \cite{KR} studied relations among idempotents in the algebra of rational endomorphisms of an arbitrary abelian variety. By means of these relations, in the case of the Jacobian variety of a Riemann surface $C$ with group action, they succeeded in proving a decomposition theorem for $JC$ in which each factor is isogenous to the Jacobian of a quotient of $C.$

 Applications of Kani-Rosen theorem can be found, for example, in \cite{BF}, \cite{CHQ}, \cite{GP}, \cite{Ruben}, \cite{nos}, \cite{RuRu} and \cite{P}. For the sake of explicitness, we exhibit here this result:

\s

{\bf Theorem C {\bf (Kani and Rosen)}}

Let $H_1, \ldots, H_t$ be groups of automorphisms of a Riemann surface $C$.  If
\begin{enumerate}
\item $H_iH_j=H_jH_i$ for all $i \neq j,$
\item the genus of $C_{H_iH_j}$ is zero for all $i \neq j$ and
\item $g_C=\Sigma_{i=1}^tg_{C_{H_i}}$
\end{enumerate}

then
\begin{equation*}
JC \sim JC_{H_1} \times \cdots \times JC_{H_t} \, .
\end{equation*}

The aim of this article is to provide a generalisation of the aforementioned theorem. More precisely, we prove the following result:

\begin{theo} \label{plano}
Let $\{H_1, \ldots, H_t\}$ be an admissible collection of  groups of automorphisms of a Riemann surface $C.$ Then \begin{equation*}
JC \sim JC_{H_1} \times \cdots \times JC_{H_t} \times P
\end{equation*}
for some abelian subvariety $P$ of $JC$.

Furthermore, if $\{H_1, \ldots, H_t\}$ is admissible and $\displaystyle g_C=\Sigma_{i=1}^tg_{C_{H_i}}\,$,  then
$$JC \sim JC_{H_1} \times \cdots \times JC_{H_t} \, .
$$
\end{theo}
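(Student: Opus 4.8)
The plan is to carry out the decomposition inside the rational endomorphism algebra $\End_{\QQ}(JC)=\End(JC)\otimes_{\ZZ}\QQ$, using the symmetrising idempotents attached to the subgroups. For each $H\le\operatorname{Aut}(C)$ I set $e_H=\tfrac{1}{|H|}\sum_{h\in H}h\in\End_{\QQ}(JC)$; this is an idempotent whose image is isogenous to $JC_H$, so that $\dim\operatorname{im}e_H=g_{C_H}$, a number which also equals the rank of $e_H$ on the tangent space of $JC$. The key algebraic fact I would use is that whenever $H_iH_j=H_jH_i$ the product $H_iH_j$ is again a subgroup and a short computation in the group algebra yields $e_{H_i}e_{H_j}=e_{H_iH_j}$. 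In particular the $e_{H_i}$ pairwise commute, and for every $S\subseteq\{1,\dots,t\}$ the subset $H_S:=\prod_{i\in S}H_i$ is a subgroup with $\prod_{i\in S}e_{H_i}=e_{H_S}$.

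Since the $e_{H_i}$ commute, I would form the join idempotent $u:=1-\prod_{i=1}^{t}(1-e_{H_i})$, which projects onto $\sum_{i=1}^{t}\operatorname{im}e_{H_i}$; expanding the product gives $u=\sum_{\emptyset\neq S}(-1)^{|S|+1}e_{H_S}$. As the $e_{H_i}$ are simultaneously diagonalisable on the tangent space, inclusion--exclusion over their joint eigenspaces yields $\dim\operatorname{im}u=\sum_{\emptyset\neq S}(-1)^{|S|+1}\dim\operatorname{im}e_{H_S}=\sum_{\emptyset\neq S}(-1)^{|S|+1}g_{C_{H_S}}$. The admissibility of the collection is precisely the hypothesis that collapses this alternating sum to $\sum_{i=1}^{t}g_{C_{H_i}}$; equivalently, it forces $\dim\bigl(\sum_i\operatorname{im}e_{H_i}\bigr)=\sum_i\dim\operatorname{im}e_{H_i}$, so the subvarieties $\operatorname{im}e_{H_i}\sim JC_{H_i}$ sit in general position inside $JC$ and their sum is isogenous to their product.

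Consequently $\operatorname{im}u\sim\prod_{i=1}^{t}\operatorname{im}e_{H_i}\sim JC_{H_1}\times\cdots\times JC_{H_t}$, and Poincar\'e reducibility applied to the complementary idempotent $1-u$ furnishes an abelian subvariety $P:=\operatorname{im}(1-u)$ of $JC$ with $JC\sim\operatorname{im}u\times P\sim JC_{H_1}\times\cdots\times JC_{H_t}\times P$; this is the first assertion. For the final statement I would only compare dimensions: isogenous abelian varieties have equal dimension, so $g_C=\sum_{i=1}^{t}g_{C_{H_i}}+\dim P$, and the hypothesis $g_C=\sum_{i=1}^{t}g_{C_{H_i}}$ forces $\dim P=0$. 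An abelian variety of dimension zero is trivial, so $P$ disappears and $JC\sim JC_{H_1}\times\cdots\times JC_{H_t}$.

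The one genuinely delicate point is the passage in the middle paragraph: I must verify that $u$ really projects onto $\sum_i\operatorname{im}e_{H_i}$ and that admissibility makes the alternating sum of the $g_{C_{H_S}}$ reduce to $\sum_i g_{C_{H_i}}$, since it is exactly this cancellation that guarantees the factors are the full Jacobians $JC_{H_i}$ rather than proper sub- or quotient-varieties. Everything afterwards is formal idempotent calculus together with a dimension count. As a sanity check, when the stronger Kani--Rosen hypothesis $g_{C_{H_iH_j}}=0$ holds for all $i\neq j$ every term with $|S|\ge2$ vanishes outright, the $e_{H_i}$ are already pairwise orthogonal, and the argument specialises to Theorem C.
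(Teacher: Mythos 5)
There is a genuine gap, and it lies at the heart of what the theorem is about. Your construction of the join idempotent $u=1-\prod_{i=1}^{t}(1-e_{H_i})$ and the inclusion--exclusion over joint eigenspaces rests entirely on the ``key algebraic fact'' $e_{H_i}e_{H_j}=e_{H_iH_j}$, which requires $H_iH_j=H_jH_i$. But pairwise permutability is \emph{not} a hypothesis of Theorem \ref{plano}; it is precisely the Kani--Rosen hypothesis that this theorem is designed to remove. Without it, $H_iH_j$ need not be a group, $e_{H_i}$ and $e_{H_j}$ need not commute, and $u$ is neither well defined independently of the ordering nor an idempotent, so the whole middle paragraph collapses. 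The paper's own example makes this concrete: for the dihedral action of $\mathbf{D}_{2q}$, the collection $\{\langle s\rangle,\langle sr\rangle,\langle r\rangle\}$ is admissible and the theorem applies, yet $\langle s\rangle$ and $\langle sr\rangle$ do not permute, so your argument cannot reach this case at all.

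A second, related problem is that you have effectively invented a definition of admissibility to suit the argument. In the paper, admissibility is a representation-theoretic condition relative to an ambient group $G\supseteq H_1,\dots,H_t$: it demands $\sum_{i=1}^{t}d_{V_l}^{H_i}\le d_{V_l}$ for every complex irreducible representation $V_l$ of $G$ whose associated factor $B_l$ in the group algebra decomposition is nonzero. It is not the statement that an alternating sum of genera $g_{C_{H_S}}$ collapses, nor the statement that the subvarieties $\pi_{H_i}^{*}(JC_{H_i})$ are in general position; you assert these identifications without proof, and your proof needs them. The paper's actual route avoids all of this: it takes the group algebra decomposition $JC\sim_G B_1^{n_1}\times\cdots\times B_r^{n_r}$ together with the induced decompositions $JC_{H_i}\sim B_1^{n_1^{H_i}}\times\cdots\times B_r^{n_r^{H_i}}$ of the quotients (the Carocca--Rodr\'iguez result, which is the genuinely new ingredient), observes that admissibility forces $B_1=0$ (since $\sum_i d_{V_1}^{H_i}=t>1=d_{V_1}$ would otherwise violate it) and gives $\sum_i n_l^{H_i}+\tilde\delta_l=n_l$ with $\tilde\delta_l\ge 0$ for the remaining $l$, and then concludes by pure exponent bookkeeping that $JC\sim JC_{H_1}\times\cdots\times JC_{H_t}\times P$ with $P=\prod_l B_l^{\tilde\delta_l}$. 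No commutation of idempotents, and no general-position statement, is ever needed. Your final dimension count for the second assertion is fine, but it sits on top of a first part that does not go through under the stated hypotheses.
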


A precise definition of what we call {\it admisible} collection of groups of automorphisms will be provided in Subsection \ref{gad}.

Theorem \ref{plano} enlarges the collection of Jacobians $JC$ for which it is possible to obtain an isogeny decomposition in terms of Jacobians of quotients of $C.$ In fact, at the end of this article, we provide examples of admissible collections which do not satisfy each one of the hypothesis of Theorem C.

\s

We anticipate the fact that the hypothesis of Theorem \ref{plano} can be rephrased in a purely algebraic way; see Proposition \ref{equiv}.

\s

Let $H$ be a group of automorphism of $C.$ We denote by $\pi_H : C \to C_H$ the associated regular covering map, and by $$\pi_H^*: JC_H \to JC$$the induced homomorphism between the respective Jacobian varieties. As the set $\pi_H^*(JC_H)$ is an abelian subvariety of $JC$ which is isogenous to $JC_H$, there exists an abelian subvariety $P(C \to C_H)$ of $JC$ such that \begin{equation} \label{prym} JC \sim JC_H \times P(C \to C_H).\end{equation}

The factor $P(C \to C_H)$ is known as the {\it Prym variety} associated to the covering map induced by $H$.

\s

As we shall see later, the collection $\{H_1\}$ is admissible for each group of automorphisms $H_1$ of $C.$ Hence, the trivial case $t=1$ corresponds to the decomposition \eqref{prym} with $H=H_1$.

The case $t=2$ is slightly more challenging. In fact, it is not a difficult task to produce (or find in the literature; see for example the Klein group case in \cite{RR}) examples of $C$ and pairs of groups of automorphisms $H_1$ and $H_2$ of $C$ such that the dimension of $JC_{H_1} \times JC_{H_2}$ exceeds the genus of $C$. Nevertheless, roughly speaking, part of such an {\it excess} can be geometrically identified. More precisely:

\begin{prop} \label{planodos}
Let $H_1$ and $H_2$ be groups of automorphisms of a Riemann surface $C.$ Then \begin{equation*}
JC \times JC_{\langle H_1, H_2 \rangle} \sim JC_{H_1} \times JC_{H_2} \times P
\end{equation*}
for some abelian subvariety $P$ of $JC$.

In particular, if the genus of $C_{\langle H_1, H_2 \rangle}$ is zero and $\displaystyle g_C=g_{C_{H_1}}+g_{C_{H_2}}$,  then
$$
JC \sim JC_{H_1} \times JC_{H_2}\, .
$$
\end{prop}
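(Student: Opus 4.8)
The plan is to work inside $\End^{0}(JC)=\End(JC)\otimes\QQ$ and to exploit the standard isogeny relating the sum and the intersection of two abelian subvarieties. Write $G=\langle H_1,H_2\rangle$, which is a finite group of automorphisms of $C$ since $\mathrm{Aut}(C)$ is finite, and set $B_i=\pi_{H_i}^*(JC_{H_i})\subseteq JC$ for $i=1,2$, so that $B_i\sim JC_{H_i}$ by \eqref{prym}. The addition morphism $B_1\times B_2\to JC$, $(x,y)\mapsto x+y$, has image the abelian subvariety $B_1+B_2$ and kernel isogenous to $B_1\cap B_2$; hence
\[
JC_{H_1}\times JC_{H_2}\sim B_1\times B_2\sim (B_1+B_2)\times(B_1\cap B_2).
\]

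The key step is to identify the intersection, namely to prove that $B_1\cap B_2\sim JC_G$. To see this I would pass to rational homology and use the standard correspondence between abelian subvarieties of $JC$ and rational sub-Hodge-structures of $V:=H_1(C,\QQ)$. Under $\pi_{H}^*$ the homology of $B_i$ is identified with the invariant subspace $V^{H_i}$, and likewise $\pi_G^*(JC_G)$ corresponds to $V^{G}$. Since the homology of the identity component of an intersection of abelian subvarieties is the intersection of the respective homologies, $B_1\cap B_2$ corresponds to $V^{H_1}\cap V^{H_2}=V^{\langle H_1,H_2\rangle}=V^{G}$, the last equality holding because a class fixed by all of $H_1$ and all of $H_2$ is fixed by the group they generate. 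As two abelian subvarieties with the same rational homology are isogenous, this yields $B_1\cap B_2\sim \pi_G^*(JC_G)\sim JC_G$.

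Now choose, by Poincaré reducibility, a complementary abelian subvariety $P$ of $B_1+B_2$ in $JC$, so that $JC\sim(B_1+B_2)\times P$. Combining this with the two displays above gives
\[
JC\times JC_{G}\sim (B_1+B_2)\times P\times(B_1\cap B_2)\sim JC_{H_1}\times JC_{H_2}\times P,
\]
which is the asserted decomposition with $G=\langle H_1,H_2\rangle$. For the final assertion, if $g_{C_G}=0$ then $JC_G$ is trivial and its factor disappears. A dimension count then finishes the argument: $\dim P=g_C-\dim(B_1+B_2)=g_C-g_{C_{H_1}}-g_{C_{H_2}}+g_{C_G}$, so the additional hypothesis $g_C=g_{C_{H_1}}+g_{C_{H_2}}$ forces $\dim P=0$, whence $JC\sim JC_{H_1}\times JC_{H_2}$.

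I expect the main obstacle to be precisely the identification $B_1\cap B_2\sim JC_{G}$: one must be careful that the intersection of abelian subvarieties corresponds to the intersection of sub-Hodge-structures (working with identity components and up to isogeny), and that the invariant subspaces genuinely combine to the invariants of the \emph{generated} group $\langle H_1,H_2\rangle$ rather than merely of the product set $H_1H_2$, which is where the non-commuting case would otherwise cause trouble. Everything else reduces to the formal isogeny calculus for sums, intersections and Poincaré complements of abelian subvarieties.
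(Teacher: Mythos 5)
Your proof is correct, but it follows a genuinely different route from the paper's. The paper stays inside the group algebra decomposition machinery: it fixes the decompositions $JC\sim_G \prod_l B_l^{n_l}$ and $JC_{H_i}\sim \prod_l B_l^{n_l^{H_i}}$, applies the two-subspace dimension formula $\dim(U+W)=\dim U+\dim W-\dim(U\cap W)$ inside each complex irreducible representation $V_l$, and uses $V_l^{H_1}\cap V_l^{H_2}=V_l^{\langle H_1,H_2\rangle}$ to get the numerical identity $n_l^{H_1}+n_l^{H_2}+\tilde{\delta}_l=n_l+n_l^{\langle H_1,H_2\rangle}$, from which the isogeny is assembled factor by factor. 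You instead work globally with the abelian subvarieties $B_i=\pi_{H_i}^*(JC_{H_i})$ and replace the fiberwise linear algebra by its abelian-variety analogue, the addition-map isogeny $B_1\times B_2\sim (B_1+B_2)\times (B_1\cap B_2)^0$, identifying $(B_1\cap B_2)^0\sim JC_{\langle H_1,H_2\rangle}$ through invariant rational homology ($V^{H_1}\cap V^{H_2}=V^{\langle H_1,H_2\rangle}$) and closing with Poincar\'e reducibility. The two arguments hinge on the same two facts -- inclusion--exclusion for a pair of subobjects, and the observation that simultaneous invariants under $H_1$ and $H_2$ are invariants of the generated group -- but yours avoids representation theory and the decomposition results of Carocca--Rodr\'iguez and Lange--Recillas entirely, at the cost of invoking standard abelian-variety facts (and the minor care you correctly flag about identity components of intersections). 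What each buys: the paper's version exhibits the excess factor $P$ as a product $\prod_l B_l^{\tilde{\delta}_l}$ of the same group-algebra factors appearing in Theorem \ref{plano} and its corollaries, keeping the whole paper in one framework; yours is more self-contained and gives $P$ a transparent geometric meaning as the Poincar\'e complement of $\pi_{H_1}^*(JC_{H_1})+\pi_{H_2}^*(JC_{H_2})$ in $JC$. Note also that both arguments share the same limitation the paper points out: neither the subspace dimension formula nor the sum/intersection isogeny has an analogue for three or more subobjects, which is why this direct method does not extend to $t\ge 3$.
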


Note that, in the previous proposition, the collection $\{H_1, H_2\}$ is not asked to be admissible. The key point in the proof of this result is the classically known formula to compute the dimension of the sum of two vector subspaces. The same argument cannot be generalized for $t \ge 3$ since, as a matter of fact, there does not exist such a formula for more than three summands.

\s

As a direct consequence of Theorem \ref{plano} we obtain:

\begin{coro} \label{rubi} Let $\{H_1, \ldots, H_t\}$ be an admissible collection of groups of automorphism of a Riemann surface $C$.

Then, for each fixed $k \in \{1, \ldots, t\}$, the following product
\begin{equation*}
JC_{H_1} \times \cdots \times JC_{H_{k-1}} \times JC_{H_{k+1}} \times \cdots \times JC_{H_t}
\end{equation*}is isogenous to an abelian subvariety of  $P(C \to C_{H_{k}})$.

Furthermore, if $\{H_1, \ldots, H_t\}$ is admissible and $\displaystyle g_C=\Sigma_{i=1}^tg_{C_{H_i}}$,  then
\begin{equation*}
JC_{H_1} \times \cdots \times JC_{H_{k-1}} \times JC_{H_{k+1}} \times \cdots \times JC_{H_t} \sim P(C \to C_{H_{k}})\, .
\end{equation*}
\end{coro}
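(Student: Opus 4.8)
The plan is to play the two available decompositions of $JC$ against one another: the one furnished by Theorem \ref{plano} and the Prym decomposition \eqref{prym} applied to $H=H_k$. Fix $k \in \{1,\dots,t\}$. On the one hand, Theorem \ref{plano} gives
$$JC \sim JC_{H_1} \times \cdots \times JC_{H_t} \times P$$
for some abelian subvariety $P$ of $JC$. On the other hand, \eqref{prym} with $H=H_k$ yields
$$JC \sim JC_{H_k} \times P(C \to C_{H_k}).$$
Equating the two expressions and isolating the factor $JC_{H_k}$ common to both, the task reduces to cancelling $JC_{H_k}$.

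The key tool I would invoke is the fact that the category of abelian varieties up to isogeny is semisimple (Poincar\'e's complete reducibility theorem), so that it enjoys the cancellation property: whenever $X \times A \sim X \times B$ one has $A \sim B$. Applying this with $X = JC_{H_k}$, with $A = P(C \to C_{H_k})$ and with $B = JC_{H_1} \times \cdots \times JC_{H_{k-1}} \times JC_{H_{k+1}} \times \cdots \times JC_{H_t} \times P$, I obtain
$$P(C \to C_{H_k}) \sim JC_{H_1} \times \cdots \times JC_{H_{k-1}} \times JC_{H_{k+1}} \times \cdots \times JC_{H_t} \times P,$$
which exhibits the desired product as one factor in an isogeny decomposition of $P(C \to C_{H_k})$. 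To pass from "being an isogeny factor" to "being isogenous to an abelian subvariety," I would note that an isogeny from a product $B_1 \times B_2$ onto $P(C \to C_{H_k})$ restricts on $B_1 \times \{0\}$ to a homomorphism with finite kernel, whose image is then an abelian subvariety isogenous to $B_1$; this settles the first assertion.

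For the second assertion I would simply feed in the extra hypothesis $g_C = \Sigma_{i=1}^t g_{C_{H_i}}$. Under this condition the second part of Theorem \ref{plano} guarantees $JC \sim JC_{H_1} \times \cdots \times JC_{H_t}$, so the factor $P$ is trivial. Carrying a trivial $P$ through the cancellation above immediately upgrades the isogeny-into-a-subvariety statement to the full isogeny
$$JC_{H_1} \times \cdots \times JC_{H_{k-1}} \times JC_{H_{k+1}} \times \cdots \times JC_{H_t} \sim P(C \to C_{H_k}).$$
I do not anticipate any serious difficulty here, the argument being formal manipulation inside the semisimple isogeny category; the only point that genuinely requires care is to justify the cancellation law explicitly rather than treating it as self-evident, since it is precisely what allows the common Jacobian factor to be stripped off.
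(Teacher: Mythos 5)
Your proposal is correct and follows essentially the same route as the paper: the paper likewise combines the decomposition of Theorem \ref{plano} with the Prym decomposition \eqref{prym} for $H=H_k$ and strips off the common factor $JC_{H_k}$ to get $JC_{H_1}\times\cdots\times JC_{H_{k-1}}\times JC_{H_{k+1}}\times\cdots\times JC_{H_t}\times P \sim P(C\to C_{H_k})$. The only difference is that you make explicit the cancellation step (via semisimplicity of the isogeny category) that the paper leaves implicit, which is a welcome addition rather than a deviation.
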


The previous result can be employed to obtain, in a very straightforward way, Jacobians which are isogenous to Prym varieties (by considering $t=2$).

\s

As a further consequence of Theorem \ref{plano}, we prove the following corollary regarding the existence of Riemann surfaces with prescribed data on their Jacobian varieties.

\begin{coro} \label{fib}
Let $t \ge 2$ and $g_1, \ldots, g_t \ge 1$ be integers. Given $t$ hyperelliptic Riemann surfaces $C_1, \ldots, C_t$ of genus $g_1, \ldots, g_t$ respectively, there exists a Riemann surface $C$ of genus $$g=1-2^t+2^{t-1}(t+\Sigma_{i=1}^t g_{i})$$ such that
$$
JC \sim JC_1 \times \cdots \times JC_t \times P \, ,
$$
for a suitable abelian subvariety $P$ of $JC$ of dimension
$$
1+ 2^{t-1}t-2^t+(2^{t-1}-1)\Sigma_{i=1}^tg_{i} \, .
$$
\end{coro}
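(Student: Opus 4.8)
The plan is to build $C$ as a single abelian cover of the projective line that simultaneously dominates all $t$ hyperelliptic curves, and then to read off the decomposition from Theorem \ref{plano}, deducing the two numerical formulae from Riemann--Hurwitz. Concretely, I realise each $C_i$ as a double cover $C_i\to\PP^1$ through its hyperelliptic map, branched over a set $B_i\subset\PP^1$ of $2g_i+2$ points, with affine equation $y_i^2=f_i(x)$ where $f_i$ has simple zeros at the finite branch points. Since the isomorphism class of $C_i$ only determines $B_i$ up to a M\"obius transformation, I first apply independent automorphisms of $\PP^1$ to make the sets $B_1,\dots,B_t$ pairwise disjoint and disjoint from $\infty$. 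I then let $C$ be the smooth projective curve with function field $\CC(x,y_1,\dots,y_t)$, that is, the normalisation of the fibre product of the maps $C_i\to\PP^1$. Because the branch loci are disjoint and reduced, no nontrivial product $\prod_{i\in S}f_i$ is a square in $\CC(x)$, so the classes of the $f_i$ are linearly independent in $\CC(x)^\times/(\CC(x)^\times)^2$; hence the extension $\CC(x,y_1,\dots,y_t)/\CC(x)$ is Galois with group $G:=(\ZZ/2\ZZ)^t$ and $C$ is irreducible.

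The group $G$ then acts on $C$, the $i$-th generator sending $y_i\mapsto-y_i$ and fixing the other coordinates, with $C/G\cong\PP^1$. For each $i$ I set $H_i:=\{(\epsilon_1,\dots,\epsilon_t)\in G:\epsilon_i=0\}$, a subgroup of order $2^{t-1}$; its fixed field is exactly $\CC(x,y_i)$, so $C_{H_i}\cong C_i$ has genus $g_i$. As $G$ is abelian the $H_i$ commute pairwise, and for $i\neq j$ one checks that $H_iH_j=G$, so that $C_{H_iH_j}\cong C/G\cong\PP^1$ has genus zero. Thus $\{H_1,\dots,H_t\}$ fulfils the hypotheses isolated in Theorem C and is, in particular, an admissible collection in the sense of Subsection \ref{gad}. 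Theorem \ref{plano} then gives
\[
JC\sim JC_{H_1}\times\cdots\times JC_{H_t}\times P\sim JC_1\times\cdots\times JC_t\times P
\]
for some abelian subvariety $P$ of $JC$.

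It remains to compute dimensions. Applying Riemann--Hurwitz to the degree-$2^t$ cover $C\to\PP^1$, each of the $2g_i+2$ points of $B_i$ has inertia group of order $2$ and therefore lies beneath $2^{t-1}$ ramification points of index $2$; summing the disjoint contributions over all $i$ yields $2g_C-2=-2^{t+1}+2^{t-1}\sum_{i=1}^t(2g_i+2)$, which simplifies to $g_C=1-2^t+2^{t-1}(t+\sum_{i=1}^t g_i)$, the asserted genus. Comparing dimensions in the isogeny above then gives $\dim P=g_C-\sum_{i=1}^t g_i=1+2^{t-1}t-2^t+(2^{t-1}-1)\sum_{i=1}^t g_i$, as claimed. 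The only points that require genuine care are the irreducibility of $C$ and the admissibility of $\{H_1,\dots,H_t\}$; both are secured by the single geometric input of arranging the branch divisors $B_i$ to be disjoint and reduced, after which the statement is a direct application of Theorem \ref{plano} together with the Riemann--Hurwitz count.
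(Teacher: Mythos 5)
Your construction of $C$ is exactly the paper's: both proofs form the fibre product of the $t$ hyperelliptic double covers after arranging the branch loci $B_1,\dots,B_t$ to be pairwise disjoint, identify each $C_i$ as the quotient of $C$ by the index-two ``coordinate-hyperplane'' subgroup (your $H_i$ is the subgroup the paper calls $K_i$), and extract both numerical formulae from Riemann--Hurwitz just as you do. The gap lies in the one step that makes Theorem \ref{plano} applicable. You verify hypotheses (1) and (2) of Theorem C and then assert that the collection $\{H_1,\dots,H_t\}$ is ``in particular'' admissible in the sense of Subsection \ref{gad}. No such implication is stated or proved in the paper: admissibility is defined by the inequality $\sum_{i=1}^t d_{V_l}^{H_i}\le d_{V_l}$ for every complex irreducible representation $V_l$ with $B_l\neq 0$, and the paper verifies this inequality for the present collection by a direct computation with the characters of $\ZZ_2^t$. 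Your inference hides a genuine argument: hypotheses (1) and (2) only supply the pairwise information $V_l^{H_i}\cap V_l^{H_j}=V_l^{H_iH_j}=0$ on the relevant representations, and pairwise trivial intersections of subspaces do not, as a matter of linear algebra, bound the sum of their dimensions (three distinct lines in a plane meet pairwise in zero). Indeed the paper itself emphasizes, right after Proposition \ref{planodos}, that pairwise dimension counts cannot handle $t\ge 3$. Nor can you lean on Theorem \ref{plano} being a ``generalisation'' of Theorem C, since your collection violates hypothesis (3) of Theorem C (here $g_C>\sum_i g_i$, which is the whole point of the nonzero factor $P$).

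The implication you invoke is in fact true, but proving it requires the Kani--Rosen idempotent trick, which is absent from your write-up: setting $p_H=\frac{1}{|H|}\sum_{h\in H}h\in\QQ[G]$, condition (1) gives $p_{H_i}p_{H_j}=p_{H_iH_j}$, condition (2) together with the induced decomposition \eqref{ind} forces $p_{H_iH_j}$ to act as zero on every $V_l$ with $B_l\neq 0$, hence $\sum_i p_{H_i}$ restricts to an idempotent on each such $V_l$, whose rank equals its trace $\sum_i d_{V_l}^{H_i}$ and is therefore at most $d_{V_l}$. Alternatively --- and this is what the paper does --- in your abelian situation the verification is immediate: every complex irreducible representation of $G\cong\ZZ_2^t$ is a character $\chi$; if $\chi$ were fixed by two distinct subgroups $H_i$ and $H_j$ it would be trivial on $H_iH_j=G$, hence trivial, and the trivial character is exempt because $C_G\cong\PP^1$ gives $B_1=0$; thus $\sum_i d_\chi^{H_i}\le 1=d_\chi$ for every relevant $\chi$. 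Either argument closes the gap in a few lines, but as written your key step is an assertion, not a proof.
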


\s

In \cite{EJPS} Ekedahl and Serre introduced the problem of determining all genera for which there is a Riemann surface whose Jacobian can be decomposed, up to isogeny, as a product of elliptic curves (for recent progress on this topic we refer to \cite{PA}). Since their paper, there has been much interest in this sort of Riemann surfaces, particularly in their applications to number theory.

More restrictive approaches to this problem were developed by Paulhus in \cite{P} and Hidalgo in \cite{Ruben}. More precisely, Paulhus dealt with the following  problem: given an integer $g,$ she asked for the largest integer $t=t(g)$ such that $E^t$ is isogenous to an abelian subvariety of $JC,$ for some Riemann surface $C$ of genus $g$ and for some elliptic curve $E.$ She answered this question for small genera. Meanwhile, a similar problem was considered by Hidalgo: given an integer $t,$ he asked for the existence of a Riemann surface $C$ of minimum genus $g=g(t)$, such that its Jacobian is isogenous to the product of at least $t$ elliptic curves and other Jacobians. By means of very explicit constructions, he determined bounds for $g$ in terms of $t.$

\s

Let $E_1, \ldots, E_t$ be $t\ge 2$ elliptic curves. We remark that, as a direct application of the previous corollary, we can ensure the existence of a Riemann surface of genus $g_t=1+2^t(t-1)$ so that its Jacobian is sogenous to the product $E_1 \times \cdots \times E_t \times P$ for a suitable abelian subvariety $P$ of it. Furthermore, using our theorem we  are in position to guarantee the existence of a Riemann surface of smaller genus than $g_t$ and satisfying the same property, as shown in the next corollary:

\begin{coro} \label{fib4}
Let $t \ge 2$ be an integer and let $E_1, \ldots, E_{t}$ be elliptic curves. Then there exists a compact Riemann surface $C$ of genus

\begin{displaymath}
g= \left\{ \begin{array}{ll}
 1-2^{t/2}+(3t)2^{(t/2)-2} & \textrm{if $t$ is even}\\
1-2^{(t+1)/2}+(3t+1)2^{(t-3)/2} & \textrm{if $t$ is odd}
  \end{array} \right.
\end{displaymath}so that $$JC \sim E_1 \times \cdots \times E_{t} \times P$$for a suitable abelian subvariety $P$ of $JC$ of dimension $g-t.$

In particular, if the elliptic curves are pairwise isogenous, then $$JC \sim E_1^{t} \times P.$$
\end{coro}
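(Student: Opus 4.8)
The plan is to deduce Corollary \ref{fib4} from Corollary \ref{fib} by grouping the given elliptic curves in pairs and replacing each pair by a single curve of genus two whose Jacobian is isogenous to the product. The essential input is the following classical fact, which I would isolate as a lemma: for any two elliptic curves $E$ and $E'$ there is a smooth curve $D$ of genus two with $JD \sim E \times E'$. I would prove it through the bielliptic model. Take $D \colon y^2 = g(x^2)$ with $g$ a cubic; then $D$ carries a Klein four-group $\{1,\iota,\sigma,\tau\}$ of automorphisms, where $\iota(x,y)=(x,-y)$ is the hyperelliptic involution, $\sigma(x,y)=(-x,y)$ and $\tau(x,y)=(-x,-y)$. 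The quotients $D_{\langle\sigma\rangle} \colon y^2 = g(u)$ and $D_{\langle\tau\rangle} \colon w^2 = u\,g(u)$ (with $u=x^2$, $w=xy$) are elliptic curves, while $D/\langle\iota,\sigma\rangle \cong \PP^1$; since $2 = 1+1$, Theorem C applied to $H_1=\langle\sigma\rangle$, $H_2=\langle\tau\rangle$ gives $JD \sim D_{\langle\sigma\rangle} \times D_{\langle\tau\rangle}$. It then remains to check that, as $g$ ranges over all cubics, the pair of $j$-invariants $(j(D_{\langle\sigma\rangle}),j(D_{\langle\tau\rangle}))$ realizes every prescribed value, so that one may arrange $D_{\langle\sigma\rangle}\cong E$ and $D_{\langle\tau\rangle}\cong E'$. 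This surjectivity is the genuine geometric content, and I expect it to be the main obstacle; it can be settled by an explicit cross-ratio computation for the two four-point branch sets $\{$roots of $g\}\cup\{\infty\}$ and $\{0\}\cup\{$roots of $g\}$, or by citing the classical literature.

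Granting the lemma, the construction is direct. If $t$ is even I would set $m=t/2$ and choose curves $D_1,\ldots,D_m$ of genus two with $JD_i \sim E_{2i-1}\times E_{2i}$. If $t$ is odd I would set $m=(t+1)/2$, take $D_1,\ldots,D_{m-1}$ of genus two with $JD_i \sim E_{2i-1}\times E_{2i}$, and put $D_m = E_t$, which is a genus one and hence hyperelliptic curve. In either case $D_1,\ldots,D_m$ is a family of $m$ hyperelliptic curves with $\sum_{i=1}^m g_{D_i} = t$, exactly the type of input required by Corollary \ref{fib}.

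Applying Corollary \ref{fib} to $D_1,\ldots,D_m$ yields a Riemann surface $C$ with $JC \sim JD_1 \times \cdots \times JD_m \times P$, and substituting the isogenies $JD_i \sim E_{2i-1}\times E_{2i}$ (and $JD_m = E_t$ in the odd case) gives $JC \sim E_1 \times \cdots \times E_t \times P$. It then remains to evaluate the invariants. The genus is $g = 1 - 2^m + 2^{m-1}(m+t)$, which becomes $1-2^{t/2}+(3t)2^{(t/2)-2}$ upon substituting $m=t/2$ and $1-2^{(t+1)/2}+(3t+1)2^{(t-3)/2}$ upon substituting $m=(t+1)/2$; and since the $t$ elliptic factors account for dimension $t$, the remaining factor $P$ has dimension $g-t$. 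The degenerate case $t=2$, where $m=1$ lies outside the range of Corollary \ref{fib}, is handled directly by taking $C=D_1$, with $P$ of dimension zero. Finally, if the $E_i$ are pairwise isogenous then $E_1\times\cdots\times E_t \sim E_1^{t}$, which gives the last assertion.
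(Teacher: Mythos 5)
Your proposal is correct and takes essentially the same route as the paper's proof: pair the elliptic curves into genus-two Jacobians and feed the resulting hyperelliptic curves into Corollary \ref{fib}, appending the lone elliptic curve $E_t$ as a genus-one hyperelliptic factor when $t$ is odd. The only differences are cosmetic: the paper simply cites the classical genus-two splitting fact (Earle, Hayashida--Nishi, Riera--Rodr\'iguez, Hidalgo) rather than sketching a bielliptic proof of it, and it leaves the $t=2$ edge case (where Corollary \ref{fib} is invoked with a single curve) implicit, whereas you handle it explicitly.
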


\s

The next theorem was also proved in  \cite{KR}; we give an alternative proof, substantially simpler than the original one.

\s

{\bf Theorem B (Kani and Rosen)} Let $G$ be a finite group of automorphisms of a Riemann surface $C$ such that $G=H_1 \cup \ldots \cup H_t$ where the subgroups $H_i$ of $G$ satisfy $H_i \cap H_j=\{1\}$ for $i \neq j$. Then
\begin{equation*}
JC^{t-1} \times JC_G^{|G|} \sim JC_{H_1}^{|H_1|} \times \cdots \times JC_{H_t}^{|H_t| } \, .
\end{equation*}

\medskip

This paper is organized as follows. In Section \ref{pre} we shall briefly review the preliminaries:  representations of groups and the group algebra decomposition theorem for Jacobians varieties (on which the proofs are based). Section \ref{proofs} is devoted to proving the results. In the last section we exhibit an explicit example in order to illustrate how our results  can be applied.

\s

{\bf Acknowledgments.} The authors are grateful to their colleague Angel Carocca for his helpful  suggestions throughout the preparation of this manuscript.

\section{Preliminaries} \label{pre}
\subsection{Representations of groups} Let $G$ be a finite group and let $\rho : G \to \mbox{GL}(V)$ be a complex representation of $G$. By abuse of notation, we shall also write $V$  to refer to the representation $\rho$. The {\it degree} $d_V$ of $V$ is the dimension of $V$ as a complex vector space, and the {\it character} of $V$ is the map obtained by associating to each $g \in G$ the trace of the matrix $\rho(g)$. Two representations $V_1$ and $V_2$ are {\it equivalent} if and only if their characters agree; we write $V_1 \cong V_2.$ The {\it character field} $K_V$ of $V$ is the field obtained by extending the rational numbers by the character values.  The {\it Schur index} $s_V$ of $V$ is the smallest positive integer such that there exists a field extension of $K_V$ of degree $s_V$ over which $V$ can be defined.

It is known that for each rational irreducible representation $W$ of $G$ there is a complex irreducible representation $V$ of $G$ such that
\begin{equation} \label{cambio}
W \otimes_{\mathbb{Q}}\mathbb{C}  \cong   (\oplus_{\sigma} V^{\sigma}) \oplus \stackrel{s_V}{\cdots}\oplus (\oplus_{\sigma} V^{\sigma}) = s_V\, \left( \oplus_{\sigma} V^{\sigma} \right) \, ,
\end{equation}
where the sum $\oplus_{\sigma}V^{\sigma}$ is taken over the Galois group associated to $\mathbb{Q} \le K_V.$ The representation $V$ is said to be {\it associated} to $W$. If $V'$ is another complex irreducible representation associated to $W$ then $V$ and $V'$ are said to be {\it Galois associated}.

\s

Let $H$ be a subgroup of $G$. We denote by $V^H$ the vector subspace of $V$ consisting of those elements which are fixed under $H.$ By Frobenius Reciprocity theorem, its dimension --denoted by $d_V^H$-- agrees with $\langle \rho_H , V \rangle_G$, where $\rho_H$ stands for the representation of $G$ induced by the trivial one of $H,$ and the brackets for the usual inner product of characters.

We refer to \cite{Serre} for further basic facts related to representations of groups.

\subsection{Complex tori and abelian varieties} A $g$-dimensional {\it complex torus} $X=V/\Lambda$ is the quotient between a $g$-dimensional complex vector space $V$ by a maximal rank discrete subgroup $\Lambda$. Each complex torus is an abelian group and a $g$-dimensional compact connected complex analytic manifold. {\it Homomorphisms} between  complex tori are holomorphic maps which  are homomorphisms of groups; we shall denote by $\End(X)$ the ring of endomorphisms of $X.$ An {\it isogeny} between two complex tori is a surjective homomorphism with finite kernel; isogenous tori are denoted by $X_1 \sim X_2$. The isogenies of a complex torus $X$ into itself are  the invertible elements of the ring of rational endomorphisms of $X$
$$
\End_{\mathbb{Q}}(X):=\End(X) \otimes_{\mathbb{Z}} \mathbb{Q} \, .
$$

An {\it abelian variety} is by definition a complex torus which is also a complex projective algebraic variety. The Jacobian variety $JC$ of a Riemann surface $C$ of genus $g$ is an  abelian variety of dimension $g.$

We refer to \cite{bl} for basic material on this topic.

\subsection{Group algebra decomposition} \label{gad} We consider a finite group $G$ and its rational irreducible representations $W_1, \ldots, W_r$. It is classically  known that if $G$ acts on $C$ then this action  induces a $\mathbb{Q}$-algebra homomorphism $$\Phi : \mathbb{Q} [G]\to \End_{\QQ}(JC).$$

For each $ \alpha \in {\mathbb Q}[G]$ we define the abelian subvariety $$A_{\alpha} := {\textup Im} (\alpha)=\Phi (l\alpha)(JC) \subset JC$$where $l$ is some positive integer chosen in such a way that $l\alpha \in {\mathbb Z}[G]$.

The decomposition of $1 = e_1 + \cdots + e_r \in \mathbb{Q}[G]$, where each $e_l$ is a uniquely determined central idempotent (computed from $W_l$), yields an isogeny $$JC \sim A_{e_1} \times \cdots \times A_{e_r}$$
which is $G$-equivariant. See \cite{l-r}.

Additionally, there are idempotents $f_{l1},\dots, f_{ln_l}$ such that $e_l=f_{l1}+\dots +f_{ln_l}$ where $n_l=d_{V_l}/s_{V_l}$ and $V_l$ is a complex irreducible representation of $G$ associated to $W_l.$ These idempotents provide $n_l$ subvarieties of $JC$ which are isogenous between them; let $B_l$ be one of them, for every $l.$ Thus
\begin{equation} \label{eq:gadec}
JC \sim_G B_{1}^{n_1} \times \cdots \times B_{r}^{n_r} \, ,
\end{equation}
called the {\it group algebra decomposition} of $JC$ with respect to $G$. See \cite{cr}.

If the representations are labeled in such a way that $W_1(=V_1)$ denotes the trivial one (as we will do in this paper) then $n_1=1$ and $B_{1} \sim JC_G$.

\s

{\bf Notation.} Throughout this paper we shall reserve the notation $\sim_G$ to refer to the group algebra decomposition \eqref{eq:gadec} of $JC$ with respect to $G$. Observe that each product
$B_i^{n_i}$ admits a $G$-action (by an appropriate multiple of $W_i$), but, in general, each $B_i$ does not.

\s

Let $H$ be a subgroup of $G$. It was also proved in \cite{cr}  that the group algebra decomposition of $JC$ with respect to $G$ induces a isogeny decomposition
\begin{equation} \label{ind}
JC_H \sim  B_{1}^{{n}_1^H} \times \cdots \times B_{r}^{n_r^H}
\end{equation}of $JC_H,$
where ${n}_l^H=d_{V_l}^H/s_{V_l} \,$.

Note that, in general, the isogeny \eqref{ind} is not a group algebra decomposition because the quotient $C_H$ does not necessarily have group action. We also mention that the dimension of each factor in \eqref{eq:gadec} is explicitly computable in terms of the monodromy of the action of $G$ on $C.$ See \cite{yoibero}.

\s

Now, once the basic preliminaries have been introduced, we are in position to bring in the precise definition of {\it admissible} collection of automorphisms.

\s

\begin{defi}
Let $C$ be a compact Riemann surface and let $G, H_1, \ldots, H_t$ be subgroups of automorphisms of $C$ such that $G$ contains $H_i$ for each $1 \le i \le t$. Consider the group algebra decomposition \eqref{eq:gadec} with respect to $G$.

The collection $\{H_1, \ldots, H_t\}$ will be called {\it $G$-admissible} if
$$d_{V_l}^{H_1}+ \cdots + d_{V_l}^{H_t}  \le d_{V_l}$$for every complex irreducible representation $V_l$ of $G$ such that $B_l \neq 0$. The collection will be called {\it admissible} if it is $G$-admissible for some group $G$.
\end{defi}

We emphasize the fact that our definition of admissibility is based on the dimensions of the vector subspaces fixed under the corresponding subgroups;
consequently, it is based on the induced isogenies \eqref{ind} with $H=H_i$. As the reader will note in the next section,
these isogenies will play a key role in our proofs; indeed, this is the new ingredient that  was not available when the Kani-Rosen decomposition theorem was originally proved.

\section{Proofs} \label{proofs}

\subsection{Proof of Theorem \ref{plano}} Let us assume that the collection $\{H_1, \ldots, H_t\}$ is $G$-admissible, and consider the group algebra decomposition of $JC$ with respect to $G$
given by  \begin{equation}\label{estuche}
JC \sim_G B_1^{n_1} \times \cdots \times B_r^{n_r}
\end{equation}
and
\begin{equation}\label{verde}
JC_{H_{i}} \sim B_1^{{n_1}^{H_i}} \times \cdots \times B_r^{{n_r}^{H_i}}
\end{equation}the corresponding induced isogeny decomposition of $JC_{H_{i}}$.

Suppose that the factor $B_l$ is associated to the rational irreducible representation $W_l$, and that this is, in turn, associated to the complex irreducible representation $V_l$ of $G$.

\s

If $t=1$, the decomposition \eqref{prym} with $H=H_1$ proves the result. Thus, from now on we assume $t>1.$

\s

{\bf Claim.} The genus of $S_G$ is zero.

\s

Assume that the genus of $S_G$ is positive or, equivalently, that the factor $B_1$ has positive dimension. Note that$$d_{V_1}^{H_1}+\cdots + d_{V_1}^{H_t}=t$$and therefore, as the collection $\{H_1, \ldots, H_t\}$ is $G$-admissible, we must have$$t=d_{V_1}^{H_1}+\cdots + d_{V_1}^{H_t} \leq d_{V_1} = 1 \, ;$$a contradiction. This proves the claim.

\s

It follows that the factor $B_1$ equals zero. Without loss of generality, we may assume that $B_l \neq 0$ for each $l > 1$. Now, as the collection $\{H_1, \ldots, H_t\}$ is supposed to be $G$-admissible, we have that
\begin{equation*}  d_{V_l}^{H_1}+\cdots + d_{V_l}^{H_t}+ \delta_l = d_{V_l}
\end{equation*}for some $\delta_l \ge 0,$ for each $l \neq 1$. The last equality can be also written as
\begin{equation*}  n_{l}^{H_1}+\cdots + n_{l}^{H_t}+ \tilde{\delta}_l = n_{l} \, ,
\end{equation*}
where $\tilde{\delta}_l=\delta_l/s_{V_l}$ for each $l \neq 1$. We remark that all $\tilde{\delta}_l$ are integers.

\s

In this way, we obtain that
$$B_l^{n_{l}^{H_1}} \times \cdots \times B_l^{n_{l}^{H_t}} \times B_l^{\tilde{\delta}_l} = B_l^{n_{l}}
$$
for all $l \neq 1.$

Thereby
$$
\prod_{l=1}^r B_l^{\tilde{\delta}_l} \times  \prod_{l=1}^r (B_l^{n_{l}^{H_1}} \times \cdots \times B_l^{n_{l}^{H_t}} ) = \prod_{l=1}^r B_l^{n_l}$$or, equivalently, if we reorder the products, we see that$$
\prod_{l=1}^r B_l^{\tilde{\delta}_l} \times  \prod_{i=1}^t   \prod_{l=1}^r B_l^{n_{l}^{H_i}} = \prod_{l=1}^r B_l^{n_l}$$

Now, by considering the isogenies \eqref{estuche} and \eqref{verde}, it follows that

\begin{equation}\label{corchete}P \times JC_{H_1} \times \cdots \times JC_{H_t}
 \sim JC \, ,
\end{equation}where $P:=\Pi_{l=1}^r B_l^{\tilde{\delta_l}} $. This proves the first result.

\s

The second one is now straightforward. Indeed, if we suppose $\{H_1, \ldots, H_t\}$ to be $G$-admissible and $g_C$ to be equal to $\Sigma_{i=1}^tg_{C_{H_i}},$ then by comparing  dimensions in both sides of \eqref{corchete}, the factor $P$  must clearly be zero. Thereby \begin{equation*}\label{corchete2}JC \sim JC_{H_1} \times \cdots \times JC_{H_t}
\end{equation*}
in this case, and the proof is complete. \qed

\subsection{Restatement of the hypothesis of Theorem \ref{plano}}
Let $H_1(C, \mathbb{Z})$ denote the integral first homology group of $C$. It is classically known that the action of $G$ on $C$ induces a representation of degree $2g$ $$ \rho_{\textup{rac}} : G \to \mbox{GL}(H_1(C, \mathbb{Z}) \otimes_{\mathbb{Z}} \mathbb{Q})$$of $G$, known as the {\it rational} representation of $G.$ By abuse of notation, we shall also write $\rho_{\textup{rac}}$  to refer to its complexification $\rho_{\textup{rac}}\otimes \mathbb{C}$.

\s

The next result exhibits algebraic restatements of the hypothesis of our main theorem:
\s

\begin{prop} \label{equiv}Let $C$ be a compact Riemann surface and let $G, H_1, \ldots, H_t$ be subgroups of automorphisms of $C$ such that $G$ contains $H_i$ for each $1 \le i \le t$. Consider the group algebra decomposition \eqref{eq:gadec} with respect to $G$.

The following statements are equivalent:
\begin{enumerate}
\item the collection $\{H_1, \ldots, H_t\}$ is $G$-admissible and $\displaystyle g_C=\Sigma_{i=1}^tg_{C_{H_i}}$
\item $\Sigma_{i=1}^t d_{V_l}^{H_i}  =d_{V_l}$ for every $l$ such that $B_l \neq 0.$
\item there are non-negative integers $a_l$ such that $$ \oplus_{i=1}^t \rho_{H_i} \cong \sum_{ \langle  \rho_{\textup{rac}}, W_l \rangle_G \neq 0  }n_{l} W_l \oplus \sum_{ \langle  \rho_{\textup{rac}}, W_l \rangle_G =0 }a_l W_l$$

\end{enumerate}
\end{prop}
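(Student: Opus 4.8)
The plan is to route the whole equivalence through statement (2), which is the purely numerical heart of the matter, and to prove $(1)\Leftrightarrow(2)$ and $(2)\Leftrightarrow(3)$ separately. Throughout I would abbreviate $b_l:=\dim B_l$, so that $B_l\neq 0$ means exactly $b_l>0$, and I would record at the outset the two dimension identities obtained by taking dimensions in the group algebra decomposition \eqref{eq:gadec} and in the induced decomposition \eqref{ind}:
\[
g_C=\sum_{l} n_l\, b_l=\sum_l \frac{d_{V_l}}{s_{V_l}}\,b_l, \qquad g_{C_{H_i}}=\sum_l n_l^{H_i}\, b_l=\sum_l \frac{d_{V_l}^{H_i}}{s_{V_l}}\,b_l .
\]

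For $(1)\Leftrightarrow(2)$ I would summate the second identity over $i$ and subtract from the first, obtaining
\[
g_C-\sum_{i=1}^t g_{C_{H_i}}=\sum_{l}\frac{b_l}{s_{V_l}}\Big(d_{V_l}-\sum_{i=1}^t d_{V_l}^{H_i}\Big),
\]
where only the indices with $b_l\neq0$ contribute. If (2) holds, then $G$-admissibility is immediate (an equality forces the required inequality), and every bracket attached to a nonzero $b_l$ vanishes, so the right-hand side is $0$ and $g_C=\sum_i g_{C_{H_i}}$; this is (1). Conversely, under (1) each bracket with $b_l\neq0$ is $\ge 0$ by admissibility while the weight $b_l/s_{V_l}$ is strictly positive, so the vanishing of the total sum forces every such bracket to be $0$, which is exactly (2). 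The only point to watch is that indices with $b_l=0$ drop out of both identities, which is built into the convention $B_l=0\Leftrightarrow b_l=0$.

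For $(2)\Leftrightarrow(3)$ I would first translate everything into multiplicities. By Frobenius reciprocity (as recalled in the preliminaries) $d_{V_l}^{H_i}=\langle\rho_{H_i},V_l\rangle_G$, so $\oplus_{i=1}^t\rho_{H_i}$ contains the complex irreducible $V_l$ with multiplicity $\sum_i d_{V_l}^{H_i}$. Since each $\rho_{H_i}=\mathrm{Ind}_{H_i}^G\mathbf 1$ is a permutation representation, hence defined over $\QQ$, the sum $\oplus_i\rho_{H_i}$ is a rational representation, and by \eqref{cambio} its multiplicity of the rational irreducible $W_l$ equals $\big(\sum_i d_{V_l}^{H_i}\big)/s_{V_l}$, an integer. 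Comparing with $n_l=d_{V_l}/s_{V_l}$, statement (3) asserts precisely that this multiplicity equals $n_l$ on the indices with $\langle\rho_{\textup{rac}},W_l\rangle_G\neq0$ and is an unconstrained non-negative integer $a_l$ elsewhere; as any multiplicity is automatically a non-negative integer, the condition on the $a_l$ is vacuous. Thus (3) is equivalent to $\sum_i d_{V_l}^{H_i}=d_{V_l}$ for every $l$ with $\langle\rho_{\textup{rac}},W_l\rangle_G\neq0$, which is (2) once we know $B_l\neq0\Leftrightarrow\langle\rho_{\textup{rac}},W_l\rangle_G\neq0$.

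The hard part, and the step I would isolate as the main obstacle, is precisely this last equivalence together with the bookkeeping between the complex irreducibles $V_l$, in which admissibility is phrased, and the rational irreducibles $W_l$, in which (3) is phrased. For the former I would argue that $A_{e_l}=B_l^{n_l}$ is the image of the central idempotent $e_l$ attached to $W_l$, and that $e_l$ acts nontrivially on $JC$ if and only if it acts nontrivially on $H_1(C,\QQ)$, that is, if and only if $W_l$ is a constituent of $\rho_{\textup{rac}}$; hence $B_l\neq0\Leftrightarrow\langle\rho_{\textup{rac}},W_l\rangle_G\neq0$. For the Schur-index bookkeeping, the delicate point is that complexifying a rational representation multiplies the multiplicity of $W_l$ by $s_{V_l}$ and introduces the entire Galois orbit of $V_l$, so one must verify that $\sum_i d_{V_l}^{H_i}$ is constant along Galois orbits and divisible by $s_{V_l}$ in order to match the $W_l$-level count with $n_l$; both facts follow from $\oplus_i\rho_{H_i}$ being genuinely rational. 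Once these compatibilities are established, the two equivalences close up and the proposition follows.
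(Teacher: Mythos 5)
Your proposal is correct and takes essentially the same route as the paper: the equivalence $(1)\Leftrightarrow(2)$ is the dimension count already implicit in the proof of Theorem \ref{plano} (to which the paper simply defers), and $(2)\Leftrightarrow(3)$ rests on the decomposition $\rho_{H_i}\cong\oplus_{l}n_l^{H_i}W_l$ together with the criterion $\dim B_l=0\Leftrightarrow\langle\rho_{\textup{rac}},W_l\rangle_G=0$, exactly the two facts the paper invokes. The only difference is packaging: where the paper cites \cite[Lemma 4.3]{cr} and states the criterion as known, you re-derive both (via Frobenius reciprocity, rationality of permutation representations, and faithfulness of the rational representation on $\End_{\mathbb{Q}}(JC)$), which makes your write-up self-contained but introduces no new idea.
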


\begin{proof} The equivalence between statements $(1)$ and $(2)$ follows directly from the proof of Theorem \ref{plano}.

We proceed to verify the one between statements $(2)$ and $(3).$ It is a known fact that $$\rho_{H_i} \cong \oplus_{l=1}^r n_l^{H_i} W_l$$for each $1 \le i \le t$ (see, for example \cite[Lemma 4.3]{cr}). Thus, $$\oplus_{i=1}^t \rho_{H_i} \cong \oplus_{l=1}^r (n_l^{H_1}+ \cdots + n_l^{H_t})W_l.$$Now, as \begin{equation*} \label{perro1}
\dim B_l=0 \ \ \textup{ if and only if } \  \ \langle \rho_{\textup{rac}}, W_l \rangle_G = 0,
\end{equation*} we see that if statement $(2)$ is assumed, then
$$ \oplus_{i=1}^t \rho_{H_i} \cong \sum_{ \langle  \rho_{\textup{rac}}, W_l \rangle_G \neq 0  }n_{l} W_l \oplus \sum_{ \langle  \rho_{\textup{rac}}, W_l \rangle_G =0 }(n_l^{H_1}+ \cdots + n_l^{H_t}) W_l$$proving statement $(3)$ with $a_{l}=n_l^{H_1}+ \cdots + n_l^{H_t}.$ The converse is direct.
\end{proof}

As a particular case, we consider the situation when the dimension of $B_l$ equals zero if and only if $l=1.$ Then, the proposition above ensures that in this case the collection $\{H_1, \ldots, H_t\}$ is $G$-admissible and $\displaystyle g_C=\Sigma_{i=1}^tg_{C_{H_i}}$ if and only if \begin{equation} \label{ocho} \oplus_{i=1}^t \rho_{H_i} \cong \sum_{ l \neq 1  }n_{l} W_l \oplus a_1W_1 \cong \rho_{\textup{reg}} \oplus (a_1-1)W_1 \end{equation}for some $a_1 \ge 1$, where $\rho_{\textup{reg}}$ stands for the regular representation of $G$.

The value of $a_1$ can easily be determined by comparing in \eqref{ocho} either the character at the identity or the multiplicity of the trivial representation; namely
$$a_1=1+|G| \cdot \left(-1+ \sum_{i=1}^t \frac{1}{|H_i|}\right)=t.$$ Hence, at the end, the hypothesis of Theorem \ref{plano} are equivalent to \begin{equation*}
\oplus_{i=1}^t \rho_{H_i}  \cong \rho_{\tiny \textup{reg}} \oplus (t-1) W_1
\end{equation*}
in this case.

\subsection{Proof of Proposition \ref{planodos}} Let $H_1$ and $H_2$ be two subgroups of automorphisms of $C$. Clearly, there is an integer $\delta_l \ge 0$ such that $$\dim (V_l^{H_1}+ V_l^{H_2})+ \delta_l = \dim V_l$$for each $l.$ By considering the dimension formula for the sum of two vector subspaces, we can assert that
$$
\dim V_l^{H_1}+ \dim V_l^{H_2}-\dim (V_l^{H_1} \cap V_l^{H_2})+ \delta_l = \dim V_l \,.
$$

Now, as
$$
V_l^{H_1} \cap V_l^{H_2}=V_l^{\langle H_1, H_2 \rangle} \, ,
$$
the following equality is obtained:$$n_l^{H_1}+n_l^{H_2}+ \tilde{\delta_l}=n_l+n_l^{\langle H_1, H_2 \rangle}$$
where $\tilde{\delta_l}=\delta_l/s_{V_l}$ is a non negative integer.

The remaining part of the proof follows analogously as done in the proof of Theorem \ref{plano}. \qed

\subsection{Proof of Corollary \ref{rubi}} The isogeny decomposition \eqref{prym} of $JC$ associated to the regular covering map $\pi_{H_k} : C \to C_{H_k}$ together with Theorem \ref{plano} ensure that \begin{equation*}
JC_{H_1} \times \cdots \times JC_{H_{k-1}} \times JC_{H_{k+1}} \times \cdots \times JC_{H_t} \times P \sim P(C \to C_{H_{k}}) \, ,
\end{equation*}for some abelian subvariety $P$ of $JC$, and the proof is done.
\qed

\subsection{Proof of Corollary \ref{fib}}  We start by recalling the well-known fact that for each $1 \le i \le t$ there exists a two-fold regular covering map over the Riemann sphere
$$
f_i : C_i \to \mathbb{P}^1
$$
ramified over $2g_i+2$ values. Furthermore, if $B_i \subset \mathbb{P}^1$ denotes the set of branch values of $f_i$, then we  choose, without loss of generality, the $f_i$ such that the intersection $B_i \cap B_j$  is empty for $i \neq j$. We denote by $H_i \cong \mathbb{Z}_2$ the deck group of $ f_i $; that is, $C_i/H_i \cong \mathbb{P}^1$.

Let
$$
C=\{(z_1, \ldots, z_t) \in C_1 \times \cdots \times C_t : f_1(z_1)=\cdots = f_t(z_t)\}
$$
be the fiber product of the coverings $f_1, \ldots, f_t$. Clearly, $C$  is endowed with canonical projections
$$
\pi_i : C \to C_i \hspace{0.5 cm} (z_1, \ldots, z_t) \mapsto z_i \, .
$$

We recall that $C$ has the structure of a compact Riemann surface (or, equivalently, a connected irreducible smooth complex projective algebraic curve; see \cite{F}, \cite{RA} and \cite{RAS}), and that the direct product
$$
H=H_1 \times \cdots \times H_t \cong \mathbb{Z}_2^t
$$
acts canonically  on $C$. It is not difficult to see that the correspondence $$f: C \to \mathbb{P}^1 \ \ , \ \ z=(z_1, \ldots, z_t) \mapsto f(z):=f_1(z_1) = \cdots = f_t(z_t)$$
is a branched regular covering map of degree $2^t$ admitting $H$ as deck group. Note that, as the coverings $f_i$ have been chosen with disjoint ramification, the set of branch values $B$ of $f$ agrees with $B_1 \cup \ldots \cup B_t$.

The projection $\pi_i : C \to C_i$ is a regular covering map of degree $2^{t-1}$ admitting
$$
K_i \cong H/H_i \cong \prod_{k=1, k \neq i}^t H_k
$$
as deck group; then $C_i \cong C/K_i$.

\s

{\bf Claim.} The collection $\{K_1, \ldots, K_t\}$ is $H$-admissible.

\s

The complex irreducible representations of $H \cong \langle K_1 , \ldots, K_t\rangle$ are of the form
$$
V:=V_1 \otimes \cdots \otimes V_t \, ,
$$
where $V_j$ is a complex irreducible representation of $H_j.$ Note that $$V^{K_i} \cong V_{1}^{H_1} \otimes \cdots \otimes V_{i-1}^{H_{i-1}} \otimes V_i \otimes V_{i+1}^{H_{i+1}} \otimes \cdots \otimes V_{t}^{H_t}$$

We remark the following obvious observation: if $V_j$ is the non-trivial representation of $H_j \cong \mathbb{Z}_2$ then $$ V_{j}^{H_j}=0.$$Thus, it follows that:

\begin{enumerate}
\item if the representation $V_j$ is non-trivial for some $j \neq i$, then $$d_{V}^{K_i}=0\, ;$$
\item if the representations $V_j$ are trivial for all $j \neq i$, then $$d_{V}^{K_i}=d_{V_i} = 1\, .$$
\end{enumerate}

Thereby
$$d_{V}^{K_1}+ \cdots + d_{V}^{K_t} \le 1 = d_{V} $$and the proof of the claim is done.
\s

It now follows from Theorem \ref{plano} that $$JC \sim JC_1 \times \cdots \times JC_t \times P$$for an abelian subvariety $P$ of $JC.$ Clearly, the dimension of $P$ is $g_C-\Sigma_{i=1}^tg_{i}.$

\s

It only remains to compute the genus of $C$. This task can be accomplished by applying the Riemann-Hurwitz formula (see, for example \cite[p. 80]{RM}) to the involved coverings. More precisely, this formula applied to $f_i$ says that \begin{equation} \label{rr1} R_i=g_i+1\end{equation}where $R_i$ is the branch number of $f_i.$ We recall that $R_i=\Sigma(1-1/m_l)$ where the sum is taken over a maximal collection of non-equivalent branch points $\{p_1^i, \ldots, p_s^i\}$ of $f_i$ and with $m_l$ denoting the order of the $H_i$-stabilizer subgroup of $p_l.$  Similarly, the Riemann-Hurwitz formula applied to $f$ says that\begin{equation} \label{rr2} 2g_C-2=2^t(-2)+2^tR. \end{equation}

Again, as the ramificacion of the coverings $f_i$ have been chosen to be disjoint, the branch number $R$ of $f$ agrees with $R_1+\cdots+R_t$. Finally,  the desired expression for $g_C$ is obtained after replacing \eqref{rr1} in \eqref{rr2}.

The proof is done. \qed

\subsection{Proof of Corollary \ref{fib4}} It is a known fact that, given two elliptic curves $E_1$ and $E_2,$ there exists a compact Riemann surface $C$ of genus two such that $JC$ is isogenous to the product $E_1 \times E_2$ (see, for example \cite{Earle}, \cite{jap} and \cite{RR2}; see also \cite[Theorem 1]{Ruben} for an explicit construction).

\s

Assume $t$ to be even; say $t=2s$ for some $s \ge 1.$ Let $C_j$ be a Riemann surface such that $$JC_j \sim E_j \times E_{j+s}$$for each $1 \le j \le s.$ Now, we apply Corollary \ref{fib} to the  Riemann surfaces $C_1, \ldots, C_s$ to guarantee the existence of a Riemann surface $C$ such that $$JC \sim \Pi_{j=1}^sJC_j \times P \sim \Pi_{i=1}^tE_i \times P$$for a suitable abelian subvariety $P$ of $JC.$

\s

Assume $t$ to be odd; say $t=2s+1$ for some $s \ge 1.$ Let $C_j$ be a Riemann surface such that $$JC_j \sim E_j \times E_{j+s}$$for each $1 \le j \le s.$ Now, we apply Corollary \ref{fib} to the  Riemann surfaces $C_1, \ldots, C_s, E_t$ to guarantee the existence of a Riemann surface $C$ such that $$JC \sim \Pi_{j=1}^sJC_j \times E_t \times P \sim \Pi_{i=1}^tE_i \times P$$for a suitable abelian subvariety $P$ of $JC.$

\s

The computation of the genera is straightforward, and the proof is done. \qed

\subsection{Proof of Theorem B}
\begin{lemm} \label{pedro}
Let $G$ be a finite group such that $G=H_1 \cup \ldots \cup H_t$ where the subgroups $H_i$ of $G$ satisfy $H_i \cap H_j=\{1\}$ for $i \neq j.$ Then
\begin{equation*}
\oplus_{i=1}^t|H_i| \rho_{H_i} \cong (t-1)\rho_{\tiny \mbox{reg}} \oplus |G| W_1
\end{equation*}
\end{lemm}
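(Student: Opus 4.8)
The plan is to prove the claimed isomorphism of complex $G$-representations by comparing characters, since two representations of a finite group are isomorphic precisely when their characters coincide. Write $\chi_i$ for the character of $\rho_{H_i}$, recall that $\chi_{\rho_{\textup{reg}}}(g)$ equals $|G|$ if $g=1$ and $0$ otherwise, and that $W_1$ has constant character $1$. Thus it suffices to establish, for every $g \in G$, the numerical identity
$$
\sum_{i=1}^t |H_i|\, \chi_i(g) = (t-1)\,\chi_{\rho_{\textup{reg}}}(g) + |G| \, .
$$

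The key tool is the Frobenius formula for the character of a representation induced from the trivial one of a subgroup, namely
$$
\chi_i(g) = \frac{1}{|H_i|}\,\#\{x \in G : x^{-1}gx \in H_i\} \, ,
$$
so that $|H_i|\,\chi_i(g) = \#\{x \in G : x^{-1}gx \in H_i\}$. At the identity this gives $|H_i|\,\chi_i(1) = |G|$ for each $i$, whence the left-hand side equals $t|G|$; the right-hand side is $(t-1)|G| + |G| = t|G|$, so the identity holds at $g=1$.

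The decisive case is $g \neq 1$, where the right-hand side reduces to $|G|$. Here the left-hand side counts pairs $(i,x)$ with $x^{-1}gx \in H_i$, and I would reorganise the count by fixing $x$ first. For each $x \in G$ the conjugate $y := x^{-1}gx$ is a non-identity element, since $g \neq 1$; by the covering hypothesis $G = H_1 \cup \cdots \cup H_t$ the element $y$ lies in some $H_i$, and by the disjointness hypothesis $H_i \cap H_j = \{1\}$ for $i \neq j$ together with $y \neq 1$, it lies in exactly one $H_i$. Hence each $x \in G$ contributes exactly one admissible pair, and the total count is precisely $|G|$, matching the right-hand side.

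The only genuine obstacle is recognising that the two hypotheses combine to pin down each conjugate of $g$ inside a unique $H_i$; everything else is a routine verification via the induced-character formula. Once this counting is set up, the character identity holds for all $g$, and the asserted isomorphism follows.
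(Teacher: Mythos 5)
Your proof is correct and is essentially the same as the paper's: both compute the character of $\oplus_{i=1}^t |H_i|\rho_{H_i}$ via the induced-character (Frobenius) formula, use the partition hypothesis to see that each non-identity conjugate $x^{-1}gx$ lies in exactly one $H_i$, and conclude by comparison with the characters of $(t-1)\rho_{\textup{reg}} \oplus |G|W_1$. No gaps; the paper organises the same count via the disjoint sets $\{s \in G : s^{-1}gs \in H_i\}$ rather than by fixing $x$ first, which is only a cosmetic difference.
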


\begin{proof}
Following \cite[p. 30]{Serre}, the character of the representation $|H_i| \rho_{H_i}$ at $g$ in $G$ is
\begin{displaymath}
 \left\{ \begin{array}{ll}
\,\,\,|G|  & \textrm{for $g=1$}\\
\# \mathscr{C}_{g,i}      & \textrm{for $g\neq 1$}
  \end{array} \right.
\end{displaymath}where $\mathscr{C}_{g, i}= \{s \in G : s^{-1}gs \in H_i \}$. Now, as $G$ is partitioned into its subgroups $H_i,$ we see that for each $g \neq 1$ the sets $\mathscr{C}_{g,i}$ and $\mathscr{C}_{g,j}$ are disjoint for $i \neq j$. It follows that the character of $ \oplus_{i=1}^t|H_i| \rho_{H_i}$ is
\begin{displaymath}
 \left\{ \begin{array}{ll}
 t|G|  & \textrm{for $g=1$}\\
\, |G|   & \textrm{for $g\neq 1$}
  \end{array} \right.
\end{displaymath}and, as the character of the regular representation is $|G|$ if $g=1$ and zero otherwise, the result follows directly by comparison of characters.
\end{proof}

\s

{\it Proof of Theorem B.}
Let us assume
\begin{equation} \label{juan}
JC \sim_G JC_G \times B_2^{n_2} \times \cdots \times B_r^{n_r}
\end{equation}
to be the group algebra decomposition of $JC$ with respect to $G$, and
$$
JC_{H_i} \sim JC_G \times B_2^{{n_2}^{H_i}} \times \cdots \times B_r^{{n_r}^{H_i}} \, ,
$$the induced  isogeny decomposition of $JC_{H_i}$ where $n_l^{H_i}=d_{V_l}^{H_i}/s_{V_l}$. Thus
$$
 JC_{H_1}^{|H_1|} \times \cdots \times JC_{H_t}^{|H_t|} \sim JC_G^{\sum_{i=1}^t |H_i|} \times B_2^{\sum_{i=1}^t n_2^{H_i}|H_i|} \times \cdots \times B_r^{\sum_{i=1}^t n_r^{H_i}|H_i|} \, .
$$

The fact that $|G| +t -1=\sum_{i=1}^t |H_i|$  implies that
$$
JC_{H_1}^{|H_1|} \times \cdots \times JC_{H_t}^{|H_t|}  \sim JC_G^{|G|} \times ( JC_G^{t-1} \times B_2^{\sum_{i=1}^t n_2^{H_i}|H_i|} \times \cdots \times B_r^{\sum_{i=1}^t n_r^{H_i}|H_i|} )
$$
and therefore we only need to prove that
$$
JC^{t-1} \sim JC_G^{t-1} \times B_2^{\sum_{i=1}^t n_2^{H_i}|H_i|} \times \cdots \times B_r^{\sum_{i=1}^t n_r^{H_i}|H_i|} \, .
$$

Again, after considering \eqref{juan}, it is enough to prove that $$
B_2^{(t-1)n_2} \times \cdots \times B_r^{(t-1)n_r} = B_2^{\sum_{i=1}^t n_2^{H_i}|H_i|} \times \cdots \times B_r^{\sum_{i=1}^t n_r^{H_i}|H_i|} \, .
$$
This equality is obtained if
$$
\sum_{i=1}^t \left[ n_l^{H_i}|H_i|-(t-1)n_l\right] = \frac{1}{s_{V_l}} \left[\sum_{i=1}^t   d_{V_l}^{H_i}|H_i|-(t-1)d_{V_l} \right]
$$
equals zero, for every $2 \le l \le r$.

Finally,  by Frobenius Reciprocity theorem, the previous expression can be rewritten as
$$
\frac{1}{s_{V_l}} \langle \oplus_{i=1}^t |H_i| \rho_{H_i}-(t-1) \rho_{\tiny \textup{reg}}, V_l \rangle_G
$$
and the result follows from Lemma \ref{pedro}. The proof of Theorem B is done. \qed

\section{Example}

 Let $q$ be an odd prime number. We consider a three-dimensional family $\mathscr{F}$ of compact Riemann surfaces $C$ of genus $g_C=4q-1$ admitting the action of a group of automorphisms $G$ isomorphic to the dihedral group$$\mathbf{D}_{2q}=\langle r,s : r^{2q}=s^2=(sr)^2=1 \rangle$$of order $4q,$ in such a way that:
 \begin{enumerate}
 \item the quotient $C_G$ has genus zero, and
 \item the associated $4q$-fold covering map $C \to C_G$ ramifies over six values; two values having $2q$ preimages (each with stabilizer a conjugate of the subgroup generated by $s$), two values having $2q$ preimages (each with stabilizer a conjugate of the subgroup generated by $sr$),  and two values having $2$ preimages (each with stabilizer the subgroup generated by $r$).
 \end{enumerate}

The existence of the family $\mathscr{F}$ is guaranteed by  Riemann's existence theorem (see, for example, \cite[Proposition 2.1]{brou}).

\s

Consider the subgroups $$H_1=\langle s \rangle, \hspace{0.4 cm} H_2=\langle sr \rangle \hspace{0.4 cm} \mbox{and} \hspace{0.4 cm} H_3=\langle r \rangle $$of $G.$ We remark that Theorem C cannot be applied in this case, since the hypothesis $(1)$ is not satisfied ($H_1$ and $H_2$ do not permute).

\s

{\bf Claim.} The collection $\{H_1, H_2, H_3\}$ is $G$-admissible.

\s

To prove the claim it is enough to consider a maximal collection of non-Galois associated (and non-trivial) complex irreducible representations of $G$. Namely, three representations of degree one
$$V_2: r \mapsto 1, s \mapsto -1, \,\,\, V_3: r \mapsto -1, s \mapsto 1, \,\,\, V_4: r \mapsto -1, s  \mapsto - 1 $$
and two of degree two \begin{equation*}
V_5 : r \mapsto
\left( \begin{array}{cc}
\omega_{2q} & 0 \\
0 & \overline{{\omega}_{2q}} \\
\end{array} \right) \,\,\,\,\, s \mapsto \left( \begin{array}{cc}
0 & 1 \\
1 & 0 \\
\end{array} \right)
\end{equation*}
\begin{equation*}
V_6 : r \mapsto
\left( \begin{array}{cc}
\omega_{q} & 0 \\
0 & \overline{{\omega}_{q}} \\
\end{array} \right) \,\,\,\,\, s \mapsto \left( \begin{array}{cc}
0 & 1 \\
1 & 0 \\
\end{array} \right)
\end{equation*}where $\omega_t:=\mbox{exp}(2 \pi i /t ).$

The claim follows after considering the following table, which summarizes the vector subspaces of $V_2, \ldots, V_6$ fixed under each of the subgroups $H_1, H_2, H_3$.

\s

\begin{center}
\begin{tabular}{|c|c|c|c|c|c|c|}  \hline
\, &  $V_2$ & $V_3$  & $V_4$ & $V_5$  & $V_6$  \\ \hline
$H_1$  & $\{0\}$ & $\langle 1 \rangle$ & $\{0\}$ & $\langle (1,1) \rangle$ & $\langle (1,1) \rangle$  \\ \hline
$H_2$  &  $\{0\}$ & $\{0\}$ & $\langle 1 \rangle$ & $\langle (1, \omega_{2q}) \rangle$ & $\langle (1, \omega_{q}) \rangle$  \\ \hline
$H_3$  & $\langle 1 \rangle$ & $\{0\}$ & $\{0\}$ & $\{0\}$ & $\{0\}$  \\ \hline
\end{tabular}
\end{center}

\s

\s

Hence, by Theorem \ref{plano}, the claim above implies that \begin{equation*}  JC \sim JC_{H_1} \times JC_{H_2} \times JC_{H_3} \times P \end{equation*}for a suitable abelian subvariety $P$ of $JC.$

Furthermore, it is not difficult to compute that $$g_{C_{H_1}}=2q-1, \hspace{0.5 cm} g_{C_{H_2}}=2q-1 \hspace{0.5 cm} \mbox{and} \hspace{0.5 cm} g_{C_{H_3}}=1 \, ,
$$
showing that their sum agrees with $g_C$.  Thereby $P=0$ and the decomposition \begin{equation} \label{deco1} JC \sim JC_{H_1} \times JC_{H_2} \times JC_{H_3} \end{equation}is obtained.

\s

We also mention some facts related to this example:

\s

\begin{enumerate}
\item by Proposition \ref{equiv}, we have that $\oplus_{i=1}^3 \rho_{H_i}  \cong \rho_{\tiny \textup{reg}}  \oplus 2 W_1.$
\s

\item By Corollary \ref{rubi}, the Prym varieties associated to the subgroups $H_1$ and $H_2$ --whose dimension is $2q$-- contain an elliptic curve, for all $q$.

\s

\item The group algebra decomposition of $JC$ with respect to $G$ is \begin{equation} \label{deco2}JC \sim_G B_1 \times B_2 \times B_3 \times B_4 \times B_5^2 \times B_6^2 \ , \end{equation}

by \cite[Theorem 5.12]{yoibero}, the dimensions of the factors are  \begin{displaymath}
\mbox{dim}(B_l)= \left\{ \begin{array}{ll}
 \,\,\,\,\,0 & \textrm{if $l=1$}\\
 \,\,\,\,\,1 & \textrm{if $l=2,3,4$}\\
q-1 & \textrm{if $l=5,6$}
  \end{array} \right.
\end{displaymath}and the decompositions \eqref{deco1} and \eqref{deco2} are related by the isogenies
$$   JC_{H_1} \sim B_3 \times B_5 \times B_6 \ \ , \ \   JC_{H_2} \sim B_4 \times B_5 \times B_6  \ \ , \ \    JC_{H_3} \sim B_2 \, . $$

\s

\item  The collection $\{H_1, H_3\}$ satisfies  hypotheses $(1)$ and $(2)$ of Theorem C, but it does not satisfy  hypothesis $(3)$. Indeed, $$g_{C_{H_1}}+g_{C_{H_3}} = 2q \neq 4q-1= g_C.$$ Thus, Theorem C cannot be applied. In contrast, it is straightforward to see that $\{H_1, H_3\}$ is, in fact, a $G$-admissible collection. Then Theorem \ref{plano} asserts that $$JC \sim JC_{H_1} \times JC_{H_3} \times Q$$for some abelian subvariety $Q$ of $JC$ of dimension $2q-1.$

\s

\item The subgroup $H_1$ and $H_4=\langle r^q \rangle$ permute, but the genus of $C_{H_1H_4}$ is positive. Then hypothesis $(2)$ in Theorem C is not satisfied, and this result cannot be applied. On the other hand, the vector subspaces of $V_2, \ldots, V_6$ fixed under $H_1$ and $H_4$ are

\s

\begin{center}
\begin{tabular}{|c|c|c|c|c|c|c|}  \hline
\, &  $V_2$ & $V_3$  & $V_4$ & $V_5$  & $V_6$  \\ \hline
$H_1$  & $\{0\}$ & $\langle 1 \rangle$ & $\{0\}$ & $\langle (1,1) \rangle$ & $\langle (1,1) \rangle$  \\ \hline
$H_4$  & $\langle 1 \rangle$ & $\{0\}$ & $\{0\}$ & $\{0\}$ & $\langle (1,1) \rangle$  \\ \hline\end{tabular}
\end{center}
\s
showing that the collection $\{H_1, H_4\}$ is $G$-admissible. Hence, by Theorem \ref{plano}, we have that $$JC \sim JC_{H_1} \times JC_{H_4} \times E$$where $E$ is an elliptic curve.
\item Note that the collection $\{H_1, H_4\}$ is $G$-admissible but it is not $\langle H_1, H_4 \rangle$-admissible.
\end{enumerate}
\s

\end{document}